\title{\LARGE \bf
On moment relaxations for linear state feedback controller synthesis with non-convex quadratic costs and constraints
}
\author{Dennis Gramlich, Sheng Gao, Hao Zhang, Carsten W. Scherer and Christian Ebenbauer% <-this % stops a space
\thanks{Carsten W. Scherer is funded by Deutsche Forschungsgemeinschaft (DFG, German Research Foundation) under Germany's Excellence Strategy - EXC 2075 – 390740016. He acknowledges the support by the Stuttgart Center for Simulation Science (SimTech).}
\thanks{Dennis Gramlich and Christian Ebenbauer are with the Chair of Intelligent Control Systems,
        RWTH Aachen University,
        52074 Aachen, Germany
        {\tt\small \{dennis.gramlich,christian.ebenbauer\} @ic.rwth-aachen.de}}%
\thanks{Sheng Gao, and Hao Zhang are with the College of Electronic and Information Engineering, the Department of Control Science and Engineering, Tongji University, Shanghai, 200092, P. R. China. {\tt\small 2110134@tongji.edu.cn,zhang\_hao@tongji.edu.cn.}}
\thanks{Carsten W. Scherer is with the Chair of Mathematical Systems Theory, University of Stuttgart, 70569 Stuttgart, Germany {\tt\small carsten.scherer@imng.uni-stuttgart.de}}%
}%
\newcommand*{\compress}{\@minipagetrue}
\newenvironment{ilist}%
{
	%from parlist package, reduces indent before bulletpoints
	\setdefaultleftmargin{1em}{1em}{}{}{}{} 
	\compress %places itemize into minipage, removing whitespace before
	\begin{itemize}%
		\setlength{\itemsep}{0pt}%
		\setlength{\topsep}{1pt} 
		\setlength{\partopsep}{0pt}
		\setlength{\parsep}{0pt}
		\setlength{\parskip}{0pt}}%
	{\end{itemize}}
\begin{document}

\maketitle
\thispagestyle{empty}
\pagestyle{empty}

%%%%%%%%%%%%%%%%%%%%%%%%%%%%%%%%%%%%%%%%%%%%%%%%%%%%%%%%%%%%%%%%%%%%%%%%%%%%%%%%
\begin{abstract}

%In this article, we wish to shed light on two details about linear controller design that we believe are not adequately discussed in the literature. \textcolor{blue}{skip first sentences}
%In this article, we wish to shed light on, firstly, a simple and effective method to account for non-convex costs and constraints and, secondly, an interpretation for the variables in which state feedback synthesis is usually convexified. We achieve this by deriving the controller design using moment matrices of state and input instead of storage functions. It turns out that this approach allows the consideration of non-convex constraints by relaxing them as constraints in expectation and that the variables in which state feedback synthesis is typically convexified can be identified with blocks of these moment matrices.

We present a simple and effective way to account for non-convex costs and constraints~in~state feedback synthesis, and an interpretation for the variables in which state feedback synthesis is typically convex. We achieve this by deriving the controller design using moment matrices of state and input. It turns out that this approach allows the consideration of non-convex constraints by relaxing them as expectation constraints, and that the variables in which state feedback synthesis is typically convexified can be identified with blocks of these moment matrices.

%We investigate convexification strategies for linear controller synthesis based on second-order moment matrices of states and inputs. This idea is closely related to semi-definite relaxations for non-convex quadratic programming and it enables the consideration of non-convex quadratic costs and constraints in the controller synthesis. The latter, however, comes at the cost of the control policy possibly being stochastic, achieving the computed cost and constraint satisfaction in expectation. In addition to this stochastic relaxation of non-convex quadratic costs/constraints, the perspective of moment matrices offers an interpretation for popular convexifying variable transforms of state feedback synthesis problems.

% Make it as clear as possible that this article investigates stochastic control policies for non-convex quadratic costs and constraints in linear state feedback synthesis and interprets the variables in which state feedback synthesis is usually convexified as moment matrices.

\end{abstract}

%%%%%%%%%%%%%%%%%%%%%%%%%%%%%%%%%%%%%%%%%%%%%%%%%%%%%%%%%%%%%%%%%%%%%%%%%%%%%%%%
\section{INTRODUCTION}
\label{sec:1}

To derive convex controller synthesis conditions subject to potentially non-convex quadratic constraints we take a similar approach as \cite{gattami2009generalized}. This approach uses moment matrices and relaxes (non-convex) quadratic constraints as expectation constraints (power constraints). Due to the similarities to \cite{gattami2009generalized}, we highlight the novelties of this paper/differences to \cite{gattami2009generalized} right at the beginning.
\begin{ilist}
    \item We show that, as in non-convex quadratic programming \cite{park2017general}, the relaxation of non-convex~quadratic constraints as expectation constraints yields a deterministic solution, for some problems, and a stochastic solution, for other problems. Deterministic means that the optimal policy of the relaxed problem is deterministic and coincides with an optimal policy of the original problem. Stochastic means that the optimal policy is stochastic and not optimal for the original problem.
    \item We show how to realize the stochastic optimal policies.
    \item We identify blocks of our moment matrices with the variables in which state feedback synthesis is usually convexified using the dualization lemma \cite{scherer2000robust}.
    \item We study moment matrices of affine linear systems.
    \item We explain the benefits of linear controller synthesis with non-convex constraints in compelling examples.
\end{ilist}

Convexification of state feedback synthesis is considered to be a relatively well understood problem in control theory. Lossless convexifications, i.e., transformations that convert a generally non-convex optimization problem into a convex optimization problem, exist for the quadratic stabilization of linear systems \cite{4793262}, $H_2$ and $H_\infty$ state feedback synthesis \cite{gahinet1994linear}, robust state feedback synthesis for systems in linear fractional representation form \cite{scherer2000linear}, and data-based controller synthesis \cite{8960476}.%\cite{holicki2023input,berberich2020robust}.
The equivalent convex problems are usually obtained by working with the \emph{dual system} \cite{KALMAN1960491}, or by invoking a \emph{dualization lemma} \cite{scherer2000robust}. Throughout, a variable transformation from the parameters of a quadratic storage function to new parameters is used, since the controller synthesis is non-convex in the original parameters for the listed problems.

All the cited studies consider the new parameters merely as a useful tool for finding the actual parameters of interest. In the present work, however, we give~the transformed parameters an interpretation by deriving the convexification based on moment matrices of state and input. In the moment matrices, state feedback synthesis is convex and %, as shown in Section \ref{sec:5}, 
the blocks of these moment matrices can be identified with transformed parameters of other works.

Studying the moment matrices of state and input of a dynamical system is certainly an established approach in the research literature. The latter comes from the fact that occupation measures \cite{pitman1977occupation} and Lyapunov measures \cite{rantzer2001dual,vaidya2008lyapunov} offer a dual perspective to value functions and Lyapunov functions with advantages for the convexification of state feedback synthesis. It is therefore proposed, e.g., in \cite{lasserre2008nonlinear,holtorf2022stochastic}, to optimize over the moment matrices of occupation measures of state and input. The restriction to linear systems and moments of order two, which we consider in this paper, is a special case of what is considered in these studies. On the other hand, linear systems are generally recognized as an important case and allow for stronger results than polynomial systems. Controller synthesis for linear systems via deterministically interpreted second-order moment matrices is discussed in detail in \cite{bamieh2024linear}. As shown in \cite{gattami2015simple}, it enables the convex computation of the $H_\infty$-norm of linear systems. In addition, state feedback LQG synthesis via second-order moment matrices is discussed in \cite{kamgarpour2017infinite}. In contrast to \cite{bamieh2024linear,kamgarpour2017infinite}, the present paper and \cite{gattami2009generalized} address control problems with non-convex quadratic costs and constraints. These are convex in the moment matrices and always admit the extraction of a stochastic optimal solution. However,~in the presence of non-convex costs or constraints, the optimal moment matrices may be realizable only by a stochastic policy satisfying constraints only in expectation.

Moment matrices can have various benefits for the convexification structural controller constraints \cite{1656453,causevic2022optimal}, or multi-objective control problems \cite{lee2023multi}. We expand this list with the design affine linear controllers that avoid non-convex regions in the state space (Section \ref{sec:6}) or swing up an inverted pendulum (Section \ref{sec:7}).

\section{Problem statement}
\label{sec:2}

We study a discrete-time linear time varying system
\begin{align}
	x_{\ti +1} = f_\ti + A_\ti x_\ti + B_\ti u_\ti + w_\ti. \label{eq:systemDynamics}
\end{align}
Here, $x_\ti \in \bbR^n$ is the state, $u_\ti \in \bbR^m$ is the control input and $w_\ti \in \bbR^n$ is a disturbance. Accordingly, $A_\ti \in \bbR^{n\times n}$ and $B_\ti \in \bbR^{n\times m}$ are the system matrices at time $\ti$ and $f_\ti \in \bbR^n$ is an additional constant term in the dynamics.

For \eqref{eq:systemDynamics} we study the stochastic control problem
\begin{subequations}
	\label{eq:timeVaryingProblem}
\begin{align}
	\minimize_{(\bbP_{u_\ti})} ~&~ \bbE \sum_{\ti =0}^N \begin{pmatrix}
		1\\
		x_\ti\\
		u_\ti
	\end{pmatrix}^\top R_\ti
	\begin{pmatrix}
		1\\
		x_\ti\\
		u_\ti
	\end{pmatrix} \label{eq:timeVaryingProblem1}\\
	\mathrm{s.t.} ~&~ x_{\ti +1} = f_\ti + A_\ti x_\ti + B_\ti u_\ti + w_\ti ,\\
	~&~ \bbE \begin{pmatrix}
		1\\
		x_\ti\\
		u_\ti
	\end{pmatrix}^\top H_{\ti i}
	\begin{pmatrix}
		1\\
		x_\ti\\
		u_\ti
	\end{pmatrix} \leq 0, \quad i = 1,\ldots,s, \label{eq:timeVaryingProblem3}\\
    ~&~ u_\ti \sim \bbP_{u_\ti},
    ~~~ w_\ti \sim \bbP_{w_\ti},
	~~~ x_0 \sim \bbP_{x_0}.
\end{align}
\end{subequations}
In \eqref{eq:timeVaryingProblem}, $\bbP_{x_0}$ denotes the probability distribution of the initial state, $\bbP_{w_\ti}$ denotes the distribution of the disturbance at time $\ti$ and $\bbP_{u_\ti}$ denotes the distribution of the input at time $\ti$. We assume that the first and second moments of $x_0$ are fixed and known and denote them with
\begin{align}
    \ovl{\Sigma}_0
    =
	\begin{pmatrix}
		\sigma_0^{11} & \sigma_0^{12}\\
		\sigma_0^{21} & \Sigma_0^{22}
	\end{pmatrix}
	=
    \bbE \begin{pmatrix}
		1\\
		x_0
	\end{pmatrix}
	\begin{pmatrix}
		1\\
		x_0
	\end{pmatrix}^\top . \label{eq:initialMoments}
\end{align}
The initial state $x_0$ and the disturbances $w_\ti$ for $\ti = 0,\ldots,N-1$ are assumed to be stochastically independent. In addition, the control input $u_\ti$ is restricted to be conditionally independent from $x_0$ and $w_\ti$ for $\ti = 0,\ldots,N-1$ given the current state $x_\ti$. The latter restriction makes sure that the closed loop is a Markov process \cite{howard2012dynamic} and is satisfied, e.g., if $u_\ti = \pi_\ti (x_\ti ,v_\ti )$, where $\pi_t$ is some measurable function and $v_\ti$ is a random variable independent from $x_0$ and $w_\ti$ for $\ti = 0,\ldots,N-1$. The matrices $R_\ti,H_{\ti i} \in \bbR^{(1+n+m) \times (1+n+m)}$ are assumed to be symmetric, but may be indefinite. In \eqref{eq:timeVaryingProblem} we regard the distributions $\bbP_{u_\ti}$ of the control inputs $u_\ti$ as the control policy and we minimize over this control policy. Problem \eqref{eq:timeVaryingProblem} could be a relaxation of a deterministic control problem with non-convex quadratic constraints, which is np-hard.

\section{Convexification over moment matrices}
\label{sec:3}

To approach \eqref{eq:timeVaryingProblem}, we study the moment matrices
\begin{align}
	\Sigma_\ti =
	\begin{pmatrix}
		\sigma_\ti^{11} & \sigma_\ti^{12} & \sigma_\ti^{13}\\
		\sigma_\ti^{21} & \Sigma_\ti^{22} & \Sigma_\ti^{23}\\
		\sigma_\ti^{31} & \Sigma_\ti^{32} & \Sigma_\ti^{33}
	\end{pmatrix}
	:=
	\bbE \begin{pmatrix}
		1\\
		x_\ti\\
		u_\ti
	\end{pmatrix}
	\begin{pmatrix}
		1\\
		x_\ti\\
		u_\ti
	\end{pmatrix}^\top \label{eq:momentEquation}
\end{align}
of a random trajectory $(x_\ti,u_\ti)$ of \eqref{eq:systemDynamics}, where $x_0 \sim \bbP_{x_0}$, $w_\ti \sim \bbP_{w_\ti}$ and $u_\ti \sim \bbP_{u_\ti}$ for $\ti = 0,\ldots,N$. Obviously, each control policy $(\bbP_{u_\ti})$ gives rise to a sequence of moments $(\Sigma_\ti)$. The advantage of studying the moments $(\Sigma_\ti)$ lies in the fact that the sequence of moments that can result from a control policy $(\bbP_{u_\ti})$ is characterized by simple linear and semi-definite constraints. These constraints involve the matrix valued functions $(\Sigma,\Sigma_+,\Sigma^w)\mapsto \widetilde{F}_\ti(\Sigma,\Sigma_+,\Sigma^w)$ defined by $\widetilde{F}_\ti(\Sigma,\Sigma_+,\Sigma^w)$ equal to
\begin{align*}
	(\bullet)^\top
	\begin{pmatrix}
		-\sigma^{11} & -\sigma^{12} & -\sigma^{13}\\
		-\sigma^{21} & -\Sigma^{22} & -\Sigma^{23}\\
		-\sigma^{31} & -\Sigma^{32} & -\Sigma^{33}\\
		& & & -\Sigma^w\\
		& & & & \sigma_+^{11} & \sigma_+^{12}\\
		& & & & \sigma_+^{21} & \Sigma_+^{22}
	\end{pmatrix}
	\begin{pmatrix}
		1 & f_\ti^\top\\
		0 & A_\ti^\top\\
		0 & B_\ti^\top\\
		0 & I\\
		1 & 0\\
		0 & I
	\end{pmatrix}.
\end{align*}

\begin{theorem} \label{thm:controllerParametrization}
	Let $\Sigma_\ti = \Sigma_\ti^\top \in \bbR^{(1+n+m) \times (1+n+m)}$ be a sequence of matrices
    satisfying the initial condition \eqref{eq:initialMoments}. Then there exists a policy $(\bbP_{u_\ti})$ generating the sequence of moments $(\Sigma_\ti)$ satisfying \eqref{eq:momentEquation} if and only if
	\begin{align}
        \Sigma_\ti &\succeq 0, & \widetilde{F}_\ti(\Sigma_\ti,\Sigma_{\ti +1},\Sigma_\ti^w) &= 0 \label{eq:cond}
	\end{align}
	hold for $\ti = 0,\ldots,N$. Moreover, if \eqref{eq:cond} holds, then there exist controller parameters $K_\ti = \begin{pmatrix}
        k_\ti^1 & K_\ti^2
    \end{pmatrix}$ and $\Sigma_\ti^v$ with
    \begin{align}
        \begin{pmatrix}
            \sigma_\ti^{31} & \Sigma_\ti^{32}
        \end{pmatrix}
        &=
        \begin{pmatrix}
            k_\ti^1 & K_\ti^2
        \end{pmatrix}
        \begin{pmatrix}
			\sigma_\ti^{11} & \sigma_\ti^{12}\\
			\sigma_\ti^{21} & \Sigma_\ti^{22}
		\end{pmatrix} \label{eq:controllerParamDef1}\\
        \Sigma_\ti^v &= \Sigma_\ti^{33} - \begin{pmatrix}
            k_\ti^1 & K_\ti^2
        \end{pmatrix} \begin{pmatrix}
            \sigma_\ti^{31} & \Sigma_\ti^{32}
        \end{pmatrix}^\top \succeq 0 \label{eq:controllerParamDef2}
    \end{align}
    and the moments $(\Sigma_\ti)$ result from the particular policy
	\begin{align}
		u_\ti &= k_\ti^1 + K_\ti^2 x_\ti + v_\ti, \label{eq:controllerReconstruction}
	\end{align}
	where $v_\ti \sim \bbP_{v_\ti}$ are some random variables independent from $x_0, w_\ti$ for $\ti = 0,\ldots,N-1$ with zero mean and variance $\Sigma_\ti^v$.
\end{theorem}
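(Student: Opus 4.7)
My plan is to prove the two directions separately and then verify that the constructed affine policy \eqref{eq:controllerReconstruction} reproduces the prescribed moments.

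\textbf{Necessity.} Given a policy $(\bbP_{u_\ti})$ generating $(\Sigma_\ti)$ via \eqref{eq:momentEquation}, $\Sigma_\ti \succeq 0$ is immediate since $\Sigma_\ti = \bbE[\xi_\ti \xi_\ti^\top]$ with $\xi_\ti = (1, x_\ti^\top, u_\ti^\top)^\top$. For $\widetilde{F}_\ti(\Sigma_\ti, \Sigma_{\ti+1}, \Sigma_\ti^w) = 0$, I would substitute the dynamics \eqref{eq:systemDynamics} into $\bbE[(1, x_{\ti+1}^\top)^\top(1, x_{\ti+1}^\top)]$. Using independence of $w_\ti$ from $(x_\ti, u_\ti)$, together with $w_\ti$ having zero mean (so that only the second-moment block $\Sigma_\ti^w$ appears), the cross-terms in $w_\ti$ factorize away and the top-left $(1+n) \times (1+n)$ block of $\Sigma_{\ti+1}$ is expressed as the sum of a quadratic form in $\Sigma_\ti$ with factor $\bigl[1,f_\ti^\top;0,A_\ti^\top;0,B_\ti^\top\bigr]$ and a quadratic form in $\Sigma_\ti^w$ with factor $[0,I]$. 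Rearranging this identity yields precisely $\widetilde{F}_\ti = 0$.

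\textbf{Sufficiency: extraction of the policy.} Partition $\Sigma_\ti$ into its top-left $(1+n)\times(1+n)$ block $A_\ti$, off-diagonal block $B_\ti := (\sigma_\ti^{13},\Sigma_\ti^{23})^\top$, and bottom-right block $C_\ti := \Sigma_\ti^{33}$. Since $\Sigma_\ti \succeq 0$, a generalized Schur-complement argument yields $\mathrm{range}(B_\ti) \subseteq \mathrm{range}(A_\ti)$ and $C_\ti - B_\ti^\top A_\ti^+ B_\ti \succeq 0$, where $A_\ti^+$ denotes the Moore--Penrose pseudo-inverse. I would then set $(k_\ti^1, K_\ti^2) := B_\ti^\top A_\ti^+$, which fulfills \eqref{eq:controllerParamDef1} by the range inclusion, and $\Sigma_\ti^v := C_\ti - (k_\ti^1, K_\ti^2)(\sigma_\ti^{31}, \Sigma_\ti^{32})^\top$, which fulfills \eqref{eq:controllerParamDef2}. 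Drawing each $v_\ti$ as, say, a Gaussian with zero mean and covariance $\Sigma_\ti^v$ from an independent randomization source adjoined to the probability space preserves the conditional-independence requirement imposed in Section \ref{sec:2}.

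\textbf{Moment matching by induction.} Finally, I would verify by induction on $\ti$ that this policy realizes $(\Sigma_\ti)$. The $(1, x_0)$ moments match by \eqref{eq:initialMoments}. Assuming that the $(1, x_\ti)$ moments coincide with $A_\ti$, direct computation of $\bbE[u_\ti]$, $\bbE[u_\ti x_\ti^\top]$, and $\bbE[u_\ti u_\ti^\top]$ under $u_\ti = k_\ti^1 + K_\ti^2 x_\ti + v_\ti$, using the zero mean and independence of $v_\ti$, recovers $\sigma_\ti^{31}, \Sigma_\ti^{32}, \Sigma_\ti^{33}$ via \eqref{eq:controllerParamDef1}--\eqref{eq:controllerParamDef2}, so the full $\Sigma_\ti$ is reproduced. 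Applying the necessity direction to this policy, the induced $(1, x_{\ti+1})$ moments satisfy $\widetilde{F}_\ti = 0$; since this equation is linear and uniquely determines the top-left block of $\Sigma_{\ti+1}$ from $\Sigma_\ti$ and $\Sigma_\ti^w$, that block is identified with the prescribed one, closing the induction. The main obstacle is the possible singularity of $A_\ti$, which blocks a direct inversion in \eqref{eq:controllerParamDef1}; the range condition extracted from $\Sigma_\ti \succeq 0$ is precisely what legitimizes the pseudo-inverse construction and ensures $\Sigma_\ti^v \succeq 0$.
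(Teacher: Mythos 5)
Your proposal is correct and follows essentially the same route as the paper: necessity by propagating the moments through the dynamics using the independence of $w_\ti$, and sufficiency by extracting the affine-plus-noise policy from the Schur complement of $\Sigma_\ti$ and closing an induction in which $\widetilde{F}_\ti(\Sigma_\ti,\Sigma_{\ti+1},\Sigma_\ti^w)=0$ uniquely pins down the $(1,x_{\ti+1})$ block. Your explicit Moore--Penrose treatment of a possibly singular top-left block is a welcome refinement of a step the paper only asserts.
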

\begin{proof}
	Let a policy $(\bbP_{u_\ti})$ be given and let $(\Sigma_\ti)$ be the sequence of moment matrices. We start by showing that \eqref{eq:cond} must hold. To this end, consider the equation
	\begin{align*}
		&\begin{pmatrix}
			1 & 0 & 0\\
			f_\ti & A_\ti & B_\ti
		\end{pmatrix}
		\begin{pmatrix}
			\sigma_\ti^{11} & \sigma_\ti^{12} & \sigma_\ti^{13}\\
			\sigma_\ti^{21} & \Sigma_\ti^{22} & \Sigma_\ti^{23}\\
			\sigma_\ti^{31} & \Sigma_\ti^{32} & \Sigma_\ti^{33}
		\end{pmatrix}
		(\bullet)^\top
        +
        \begin{pmatrix}
            0 & 0\\
            0 & \Sigma^w
        \end{pmatrix}\\
		&\overset{\eqref{eq:momentEquation}}{=}
			\bbE \begin{pmatrix}
				1 & 0 & 0 & 0\\
				f_\ti & A_\ti & B_\ti & I
			\end{pmatrix} \begin{pmatrix}
			1\\
			x_\ti\\
			u_\ti\\
            w_\ti
		\end{pmatrix}
        \begin{pmatrix}
			1\\
			x_\ti\\
			u_\ti\\
            w_\ti
		\end{pmatrix}^\top
        (\bullet)^\top\\
		&\overset{\eqref{eq:systemDynamics}}{=}
		\bbE \begin{pmatrix}
			1\\
			x_{\ti +1}
		\end{pmatrix}
		\begin{pmatrix}
			1\\
			x_{\ti +1}
		\end{pmatrix}^\top
		\overset{\eqref{eq:momentEquation}}{=}
		\begin{pmatrix}
			\sigma_{\ti +1}^{11} & \sigma_{\ti +1}^{12}\\
			\sigma_{\ti +1}^{21} & \Sigma_{\ti +1}^{22}
		\end{pmatrix},
	\end{align*}
	which follows from \eqref{eq:momentEquation}, \eqref{eq:systemDynamics} and the independence of $w_\ti$ from $(x_\ti,u_\ti)$. This equation implies $\widetilde{F}_\ti(\Sigma_\ti,\Sigma_{\ti +1},\Sigma_\ti^w) = 0$. The constraints $\Sigma_\ti \succeq 0$ must obviously hold. Hence, \eqref{eq:cond} is satisfied for any control policy $(\bbP_{u_\ti})$.
	
	Next, let a sequence of matrices $(\Sigma_\ti)$ satisfying \eqref{eq:cond} be given. We show that these matrices equal the moment matrices under the policy \eqref{eq:controllerReconstruction}. To this end, assume that
	\begin{align}
		\bbE \begin{pmatrix}
			1\\
			x_\ti
		\end{pmatrix}
		\begin{pmatrix}
			1\\
			x_\ti
		\end{pmatrix}^\top 
		=
		\begin{pmatrix}
			\sigma_\ti^{11} & \sigma_\ti^{12}\\
			\sigma_\ti^{21} & \Sigma_\ti^{22}
		\end{pmatrix} \label{eq:inductionHypothesis}
	\end{align}
	and \eqref{eq:controllerReconstruction} hold at time $\ti$. Then, due to $\Sigma_\ti \succeq 0$, there exist controller parameters $K_\ti = \begin{pmatrix}
        k_\ti^1 & K_\ti^2
    \end{pmatrix}$ and $\Sigma_\ti^v = \Sigma_\ti^{33} - k_\ti^1 \sigma_\ti^{13} - K_\ti^2\Sigma_\ti^{23} \succeq 0$ satisfying \eqref{eq:controllerParamDef1} and \eqref{eq:controllerParamDef2} such that a random variable $v_\ti \sim \bbP_{v_\ti}$ with zero mean and variance $\Sigma_\ti^v$ can be defined. Next, due to $\bbE v_\ti = 0$, $\bbE v_\ti x_\ti^\top = 0$ and \eqref{eq:controllerReconstruction}, we can show
   \begin{align*}
		\bbE \begin{pmatrix}
		    1\\
            x_\ti\\
            u_\ti
		\end{pmatrix}u_\ti^\top 
        &=
        \begin{pmatrix}
            1 & 0 & 0\\
            0 & I & 0\\
            k_\ti^1 & K_\ti^2 & I
        \end{pmatrix}
        \bbE \begin{pmatrix}
            1\\
            x_\ti\\
            v_\ti
        \end{pmatrix}
        \begin{pmatrix}
            1\\
            x_\ti\\
            v_\ti
        \end{pmatrix}^\top 
        \begin{pmatrix}
            (k_\ti^1)^\top\\ (K_\ti^2)^\top\\ I
        \end{pmatrix}
        \\
        &=
        \begin{pmatrix}
            1 & 0 & 0\\
            0 & I & 0\\
            k_\ti^1 & K_\ti^2 & I
        \end{pmatrix}
        \begin{pmatrix}
            \sigma_\ti^{11} & \sigma_\ti^{12} & 0\\
            \sigma_\ti^{21} & \Sigma_\ti^{22} & 0\\
            0 & 0 & \Sigma_\ti^v
        \end{pmatrix}
        \begin{pmatrix}
            (k_\ti^1)^\top\\ (K_\ti^2)^\top\\ I
        \end{pmatrix}\\
        &=
        \begin{pmatrix}
            1 & 0 & 0\\
            0 & I & 0\\
            k_\ti^1 & K_\ti^2 & I
        \end{pmatrix}
        \begin{pmatrix}
            \sigma_\ti^{13}\\ \Sigma_\ti^{23}\\ \Sigma_\ti^v
        \end{pmatrix}
        = \begin{pmatrix}
            \sigma_\ti^{13}\\
            \Sigma_\ti^{23}\\
            \Sigma_\ti^{33}
        \end{pmatrix}.
	\end{align*}
	Combining the above with \eqref{eq:inductionHypothesis} shows that \eqref{eq:momentEquation} holds at time $\ti$. Next, \eqref{eq:momentEquation} and $\widetilde{F}_\ti(\Sigma_\ti,\Sigma_{\ti +1},\Sigma_\ti^w) = 0$ together imply \eqref{eq:inductionHypothesis} at time $\ti+1$. In this way, we can prove inductively that $(\Sigma_\ti)$ satisfies \eqref{eq:momentEquation} for all times $\ti$.
\end{proof}
Due to Theorem \ref{thm:controllerParametrization}, we can directly optimize over the matrix sequence $(\Sigma_\ti)$ instead of the policy $(\bbP_{u_\ti})$. To this end, notice that the cost \eqref{eq:timeVaryingProblem1} and constraints \eqref{eq:timeVaryingProblem3} can easily be expressed in terms of $\Sigma_\ti$ as
\begin{align*}
	\sum_{\ti =0}^N \trace \Sigma_\ti R_\ti && \text{and} && 
	\trace \Sigma_\ti H_{\ti i} \leq 0.
\end{align*}
Hence, we can reformulate \eqref{eq:timeVaryingProblem} as the convex program
\begin{align}
	\minimize_{(\Sigma_\ti)} ~&~ \sum_{\ti =0}^N \trace \Sigma_\ti R_\ti \label{eq:timeVaryingSynthesis}\\
	\mathrm{s.t.} ~&~ \widetilde{F}(\Sigma_\ti,\Sigma_{\ti +1},\Sigma_\ti^w) = 0, & \ti = 0,\ldots,N-1, \nonumber\\
	~&~ \trace \Sigma_\ti H_{\ti i} \leq 0, & \ti = 0,\ldots, N, i = 1,\ldots,s,\nonumber\\
	~&~ \Sigma_\ti \succeq 0, & \ti = 0,\ldots,N. \nonumber
\end{align}

\begin{remark}
    We mention that in the case $\Sigma_\ti^v = 0$ for $\ti = 0,\ldots,N$, the optimal policy is deterministic and otherwise it is stochastic. As it is shown in \cite{kamgarpour2017infinite}, if $R_\ti$ and $H_{\ti i}$ are positive semi-definite for all $\ti$ and $i$, then the optimal policy will be deterministic.
\end{remark}

\begin{remark}
    Consider \cite{gattami2009generalized} to see how to combine the state feedback synthesis presented in this section with a Kalman filter to obtain optimal output feedback policies.
\end{remark}

\section{The time-invariant case}
\label{sec:4}

We can also study the time-invariant infinite time-horizon version of \eqref{eq:timeVaryingProblem} defined as
\begin{subequations}
	\label{eq:timeInvProblem}
	\begin{align}
		\minimize_{\bbP_u} ~&~ \bbE
		\lim_{N\to \infty} \frac{1}{N} \sum_{\ti =0}^N
		\begin{pmatrix}
			1\\
			x_\ti\\
			u_\ti
		\end{pmatrix}^\top R
		\begin{pmatrix}
			1\\
			x_\ti\\
			u_\ti
		\end{pmatrix} \label{eq:timeInvProblem1}\\
		\mathrm{s.t.} ~&~ x_{\ti +1} = f + A x_\ti + B u_\ti + w_\ti ,\\
		~&~ \bbE
		\lim_{N\to \infty} \frac{1}{N} \sum_{\ti =0}^N
		\begin{pmatrix}
			1\\
			x_\ti\\
			u_\ti
		\end{pmatrix}^\top H_i
		\begin{pmatrix}
			1\\
			x_\ti\\
			u_\ti
		\end{pmatrix} \leq 0, \quad i = 1,\ldots,s, \label{eq:timeInvProblem3}\\
    	~&~ u_\ti \sim \bbP_u,
        ~~~ w_\ti \sim \bbP_w,
		~~~ x_0 \sim \bbP_{x_0}.
	\end{align}
\end{subequations}
In this problem, we minimize the average cost \eqref{eq:timeInvProblem1}, since the summed cost might often be unbounded, and we impose the average constraints \eqref{eq:timeInvProblem3}. In addition, we assume that the system data $(f,A,B)$, our control policy $\bbP_u$, and the noise distribution $\bbP_w$ with variance $\Sigma^w$ are time-invariant. As in the time-varying case, we study the matrix function $\widetilde{F}$. This time, however, we seach for a time-invariant matrix
\begin{align}
	\Sigma =
	\begin{pmatrix}
		\sigma^{11} & \sigma^{12} & \sigma^{13}\\
		\sigma^{21} & \Sigma^{22} & \Sigma^{23}\\
		\sigma^{31} & \Sigma^{32} & \Sigma^{33}
	\end{pmatrix}
\end{align}
and define a time-invariant control policy according to
\begin{align}
	u_\ti &= k^1 + K^2 x_\ti + v_\ti \label{eq:tInvControllerReconstruction}
\end{align}
with controller parameters $K = \begin{pmatrix} k^1 & K^2 \end{pmatrix}$, $\Sigma^v$ satisfying
\begin{align}
 \begin{pmatrix}
		\sigma^{31} & \Sigma^{32}
	\end{pmatrix}
    &= 
    \begin{pmatrix} k^1 & K^2 \end{pmatrix}
	\begin{pmatrix}
		\sigma^{11} & \sigma^{12}\\
		\sigma^{21} & \Sigma^{22}
	\end{pmatrix}\\
    \Sigma^v &= \Sigma^{33} - \begin{pmatrix} k^1 & K^2 \end{pmatrix} \begin{pmatrix} \sigma^{13} & \Sigma^{23} \end{pmatrix}^\top \label{eq:tInvSigmaV}
\end{align}
and $v_\ti$ for $\ti \in \bbN_0$ being identically and independently distributed random variables, independent from $x_0$ and $w_\ti$ for $\ti \in \bbN_0$ with zero mean and variance $\Sigma^v$.
\begin{lemma}
	\label{lem:sigmaConvergence}
	Let $\Sigma \in \bbR^{(1+n+m)\times (1+n+m)}$ satisfy the conditions $\Sigma \succeq 0$, $\widetilde{F}(\Sigma,\Sigma,\Sigma^w) = 0$ and $\sigma^{11} = 1$. Then, if $\Sigma^w \succ 0$ holds and $u_\ti$ is chosen according to the policy \eqref{eq:tInvControllerReconstruction}, the sequence of moment matrices $(\Sigma_\ti)$ generated by the policy \eqref{eq:tInvControllerReconstruction} converges to $\Sigma$.
\end{lemma}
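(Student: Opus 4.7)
My plan is to reduce the claim to Schur stability of the closed-loop matrix $\tilde A := A + BK^2$ via a standard discrete Lyapunov argument.

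First, substituting the policy \eqref{eq:tInvControllerReconstruction} into the time-invariant version of \eqref{eq:systemDynamics} yields
\[
x_{t+1} = \tilde f + \tilde A x_t + \tilde w_t, \qquad \tilde f := f + Bk^1, \quad \tilde w_t := Bv_t + w_t,
\]
where $\tilde w_t$ has zero mean and covariance $Q := B\Sigma^v B^\top + \Sigma^w \succeq \Sigma^w \succ 0$. The parameters $(K,\Sigma^v)$ are guaranteed to exist since $\Sigma \succeq 0$, by the same reasoning as in Theorem~\ref{thm:controllerParametrization}. The time-invariant analog of \eqref{eq:controllerParamDef1}--\eqref{eq:controllerParamDef2} then lets me factor
\[
\Sigma \;=\; \begin{pmatrix} I & 0 \\ 0 & I \\ k^1 & K^2 \end{pmatrix} \overline\Sigma \begin{pmatrix} I & 0 \\ 0 & I \\ k^1 & K^2 \end{pmatrix}^{\!\top} + \begin{pmatrix} 0 & 0 & 0 \\ 0 & 0 & 0 \\ 0 & 0 & \Sigma^v \end{pmatrix},
\]
where $\overline\Sigma$ denotes the top-left $(1+n)\times(1+n)$ block of $\Sigma$. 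Plugging this into $\widetilde F(\Sigma,\Sigma,\Sigma^w)=0$ makes the $B$- and $\Sigma^v$-terms consolidate into $Q$, leaving the reduced fixed-point identity
\[
\overline\Sigma \;=\; \begin{pmatrix} 1 & 0 \\ \tilde f & \tilde A \end{pmatrix} \overline\Sigma \begin{pmatrix} 1 & 0 \\ \tilde f & \tilde A \end{pmatrix}^{\!\top} + \begin{pmatrix} 0 & 0 \\ 0 & Q \end{pmatrix}.
\]

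Next, I read off the block identities. Using $\sigma^{11}=1$, the $(2,1)$ block gives $\bar\mu := \sigma^{21} = \tilde f + \tilde A \bar\mu$; substituting this into the $(2,2)$ block and subtracting $\bar\mu\bar\mu^\top$ from both sides collapses the identity to the discrete Lyapunov equation $P = \tilde A P \tilde A^\top + Q$ for $P := \Sigma^{22} - \bar\mu\bar\mu^\top$, which is PSD by the Schur complement of $\overline\Sigma \succeq 0$. The standard eigenvalue argument now forces Schur stability of $\tilde A$: for any eigenvalue $\lambda$ with left eigenvector $\zeta \neq 0$, the Lyapunov equation yields $(1-|\lambda|^2)\,\zeta^* P \zeta = \zeta^* Q \zeta > 0$; since $P \succeq 0$ this forces both $\zeta^* P \zeta > 0$ and $|\lambda| < 1$.

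Finally, with $\tilde A$ Schur stable the closed-loop mean iteration $\mu_{t+1} = \tilde f + \tilde A \mu_t$ converges geometrically to $\bar\mu$ from any initial moment of $x_0$, and the iteration $P_{t+1} = \tilde A P_t \tilde A^\top + Q$ converges to the unique PSD solution $P$; hence $\Sigma_t^{22} = P_t + \mu_t\mu_t^\top \to \Sigma^{22}$. The remaining blocks of $\Sigma_t$ are explicit continuous functions of $\mu_t$ and $\Sigma_t^{22}$ via $u_t = k^1 + K^2 x_t + v_t$ (using $v_t \perp x_t$), so $\Sigma_t \to \Sigma$. The main obstacle is the bookkeeping in the reduction from $\widetilde F(\Sigma,\Sigma,\Sigma^w)=0$ to the clean Lyapunov equation---the controller factorization must be inserted so that all cross-terms involving $\Sigma^v$ and the $B$-blocks cancel---together with the observation that $\Sigma^w \succ 0$ is precisely what excludes the degenerate case $\zeta^* P \zeta = 0$ in the eigenvalue step.
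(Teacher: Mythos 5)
Your proof is correct and follows essentially the same route as the paper: extract the closed-loop Lyapunov relation from $\widetilde{F}(\Sigma,\Sigma,\Sigma^w)=0$, deduce Schur stability of $A+BK^2$ using $\Sigma^w\succ 0$, and then let stability drive the moment recursion to its fixed point. The only (cosmetic) difference is that you center the covariance to $P=\Sigma^{22}-\bar\mu\bar\mu^\top$ and spell out the left-eigenvector argument, whereas the paper works with the uncentered $(1+n)\times(1+n)$ block difference $\Sigma-\Sigma_\ti$ and simply asserts stability from the Lyapunov inequality; your version makes that asserted step explicit.
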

\begin{proof}
	Denote the equation $\widetilde{F}(\Sigma,\Sigma,\Sigma^w) = 0$ as
	\begin{align*}
		\begin{pmatrix}
			\sigma^{11} & \sigma^{12}\\
			\sigma^{21} & \Sigma^{22}
		\end{pmatrix}
		&\succeq
		\begin{pmatrix}
			1 & 0\\
			f^K & A^K
		\end{pmatrix}
		\begin{pmatrix}
			\sigma^{11} & \sigma^{12}\\
			\sigma^{21} & \Sigma^{22}
		\end{pmatrix}
		\begin{pmatrix}
			1 & 0\\
			f^K & A^K
		\end{pmatrix}^\top\\
		&+
		\begin{pmatrix}
			0 & 0\\
			0 & \Sigma^w + B\Sigma^v B^\top
		\end{pmatrix},
	\end{align*}
	where $f^K := f + Bk^1$ and $A^K = A + BK^2$. This Lyapunov inequality implies that $A^K$ is stable. Next consider $\widetilde{\Sigma}_\ti := \Sigma - \Sigma_\ti$. For this matrix $\tilde{\sigma}_\ti^{11} = 0$ holds for all $\ti$, since by assumption $\sigma^{11} = 1 = \sigma_\ti^{11}$. In addition, this matrix satisfies the Lyapunov equation
	\begin{align*}
		\begin{pmatrix}
			\tilde{\sigma}_{\ti +1}^{11} & \tilde{\sigma}_{\ti +1}^{12}\\
			\tilde{\sigma}_{\ti +1}^{21} & \widetilde{\Sigma}_{\ti +1}^{22}
		\end{pmatrix}
		=
		\begin{pmatrix}
			1 & 0\\
			f^K & A^K
		\end{pmatrix}
		\begin{pmatrix}
			\tilde{\sigma}_\ti^{11} & \tilde{\sigma}_\ti^{12}\\
			\tilde{\sigma}_\ti^{21} & \widetilde{\Sigma}_\ti^{22}
		\end{pmatrix}
		\begin{pmatrix}
			1 & 0\\
			f^K & A^K
		\end{pmatrix}^\top .
	\end{align*}
    The left lower block of this Lyapunov equation reads $\tilde{\sigma}_{\ti +1}^{21} = A^K\tilde{\sigma}_\ti^{21} + f^K\tilde{\sigma}_\ti^{11}$, which, by $\tilde{\sigma}_\ti^{11} = 0$ for all $\ti$ and stability of $A^K$ implies $\tilde{\sigma}_\ti^{21} \to 0$ for $\ti \to \infty$. Finally, inspecting the right lower block of this Lyapunov equation $\widetilde{\Sigma}_{\ti +1}^{22} = f^K\sigma_\ti^{11} (f^K)^\top + A^K\sigma_\ti^{21}(f^K)^\top + f^K\sigma_\ti^{12} (A^K)^\top + A^K \widetilde{\Sigma}_\ti^{22} (A^K)^\top$ and recalling $A^K$ being stable, $\tilde{\sigma}_\ti^{11}$ being zero for all $\ti$ and $\tilde{\sigma}_\ti^{21}$ converging to zero, we conclude that $\widetilde{\Sigma}_\ti^{22}$ converges to zero. In total, $\widetilde{\Sigma}_\ti$ converges to zero.
\end{proof}

Due to Lemma \ref{lem:sigmaConvergence}, the average cost and average constraints simplify under the control policy \eqref{eq:tInvControllerReconstruction} to
\begin{align*}
	\bbE
	\lim_{N\to \infty} \frac{1}{N} \sum_{\ti =0}^N
	\begin{pmatrix}
		1\\
		x_\ti\\
		u_\ti
	\end{pmatrix}^\top R
	\begin{pmatrix}
		1\\
		x_\ti\\
		u_\ti
	\end{pmatrix}
	&= \trace \Sigma R,\\
	\bbE\lim_{N\to \infty} \frac{1}{N} \sum_{\ti =0}^N
	\begin{pmatrix}
		1\\
		x_\ti\\
		u_\ti
	\end{pmatrix}^\top H_i
	\begin{pmatrix}
		1\\
		x_\ti\\
		u_\ti
	\end{pmatrix}
	&= \trace \Sigma H_i.
\end{align*}
%Moreover, making use of averaging arguments, it can be shown that the optimal time-varying controller converges to a stationary controller in the limit of $\ti \to \infty$.
Consequently, the time-invariant control problem \eqref{eq:timeInvProblem} can be solved as the convex program
\begin{align}
	\minimize_{(\Sigma)} ~&~ \trace \Sigma R \label{eq:timeInvSynthesis}\\
	\mathrm{s.t.} ~&~ \sigma^{11} = 1,~~\Sigma \succeq 0, \nonumber\\
	~&~ \widetilde{F}(\Sigma,\Sigma,\Sigma^w) = 0, \nonumber\\
	~&~ \trace \Sigma H_{i} \leq 0, & i = 1,\ldots,s.\nonumber
\end{align}

\begin{remark}
	Since, by Lemma \ref{lem:sigmaConvergence}, $\Sigma_\ti$ converges to the stationary solution $\Sigma$, the expectation constraint \eqref{eq:timeVaryingProblem3} is satisfied asymptotically and not just on average. Moreover, if $\Sigma_0$ happens to equal $\Sigma$, then this constraint is satisfied pointwise in time.
\end{remark}

\begin{remark}
	Also a mixture of the time-varying and time-invariant control problem can be studied. To this end, we may assume that the covariance matrix sequence becomes stationary, that is, $(\Sigma_\ti) = (\Sigma_0,\Sigma_1,\ldots,\Sigma_{N-1},\Sigma_N,\Sigma_N,\Sigma_N,\ldots)$. Then, if the problem parameters $(f_\ti,A_\ti,R_\ti,H_{\ti i})$ also become stationary for $\ti \geq N$, we could consider the constraints $\widetilde{F}(\Sigma_\ti,\Sigma_{\ti +1},\Sigma_\ti^w) = 0$ for  $\ti = 0,\ldots,N-1$, and $\widetilde{F}(\Sigma_N,\Sigma_N,\Sigma_N^w) = 0$ for the sequence $(\Sigma_\ti)$.
    
    Regarding a suitable cost, we mention that a stationary sequence $(\Sigma_\ti)$ permits, e.g., for a discounted cost
	\begin{align*}
		\bbE \sum_{\ti =0}^N \gamma^\ti\begin{pmatrix}
			1\\
			x_\ti\\
			u_\ti
		\end{pmatrix}^\top R
		\begin{pmatrix}
			1\\
			x_\ti\\
			u_\ti
		\end{pmatrix}
		=
		\sum_{\ti =0}^{N-1} \gamma^\ti \trace R \Sigma_\ti + \frac{\gamma^N \trace R\Sigma_N}{1-\gamma}
	\end{align*}
	with discount factor $\gamma \in[0,1[$ by choosing $R_\ti = \gamma^\ti R$ for $\ti = 0,\ldots,N-1$ and $R_N = \frac{\gamma^N}{1-\gamma} R$.
\end{remark}

\section{The moment matrix perspective and duality}
\label{sec:5}

Next, we explore how the convexification of state feedback synthesis problems given in Section \ref{sec:3} and \ref{sec:4} relates to standard strategies for state feedback synthesis. To this end, we study one of the most basic problems of state feedback synthesis, namely, quadratic stabilization. Particularly, an affine linear controller $K$ stabilizes (some equilibrium of) the closed loop $(f^K,A^K) := (f + Bk^1, A + BK^2)$ of an affine linear system with $K$ in the sense of Lyapunov if there exists a symmetric matrix $P$ such that the \emph{primal matrix inequalities}
\begin{align}
	(\bullet)^\top
	\left(\begin{array}{cc|cc}
		-p^{11} & -p^{12}\\
		-p^{21} & -P^{22}\\ \hline
		& & p^{11} & p^{12}\\
		& & p^{21} & P^{22}
	\end{array}\right)
	\begin{pmatrix}
		1 & 0\\
		0 & I\\
		\hline
		1 & 0\\
		f^K & A^K
	\end{pmatrix}
	\preceq 0 \label{eq:oldPrimal}
\end{align}
and $P \succ 0$ hold. Equivalently an affine linear controller $K$ stabilizes an affine linear system if there exists a symmetric matrix $\widetilde{P}$ such that the \emph{dual matrix inequalities}
\begin{align}
	(\bullet)^\top
	\left(\begin{array}{cc|cc}
		-\tilde{p}^{11} & -\tilde{p}^{12}\\
		-\tilde{p}^{21} & -\widetilde{P}^{22}\\ \hline
		& & \tilde{p}^{11} & \tilde{p}^{12}\\
		& & \tilde{p}^{21} & \widetilde{P}^{22}
	\end{array}\right)
	\begin{pmatrix}
		1 & (f^K)^\top\\
		0 & (A^K)^\top\\
        \hline
		1 & 0\\
		0 & I
	\end{pmatrix}
	\succeq 0 \label{eq:oldDual}
\end{align}
and $\widetilde{P} \succ 0$ hold. The matrices $P$ and $\widetilde{P}$ can be linked by
\begin{align*}
    P^{-1} = \begin{pmatrix}
        p^{11} & p^{12}\\
		p^{21} & P^{22}
    \end{pmatrix}^{-1}
    =
    \begin{pmatrix}
        \tilde{p}^{11} & \tilde{p}^{12}\\
		\tilde{p}^{21} & \widetilde{P}^{22}
    \end{pmatrix}
    =
    \widetilde{P}.
\end{align*}

The relations between \eqref{eq:oldPrimal} and \eqref{eq:oldDual}, and $P$ and $\widetilde{P}$ can be established by congruence transforms and the Schur complement, or by the following dualization lemma.

\begin{lemma}[derived from Lemma 10.2, \cite{scherer2000robust}]
	\label{lem:dualizationLemma}
	Let $M$ be a real, symmetric, non-singular matrix. Then the \emph{primal matrix inequalities}
	\begin{align*}
		\begin{pmatrix}
			I\\
			W
		\end{pmatrix}^\top
		M
		\begin{pmatrix}
			I\\
			W
		\end{pmatrix}
		&\preceq 0,
		&
		\begin{pmatrix}
			0\\
			I
		\end{pmatrix}^\top
		M
		\begin{pmatrix}
			0\\
			I
		\end{pmatrix}&\succ 0
	\end{align*}
	are equivalent to the \emph{dual matrix inequalities}
	\begin{align*}
		\begin{pmatrix}
			W^\top\\
			-I
		\end{pmatrix}^\top
		M^{-1}
		\begin{pmatrix}
			W^\top\\
			-I
		\end{pmatrix}
		&\succeq 0,
		&
		\begin{pmatrix}
			I\\
			0
		\end{pmatrix}^\top
		M^{-1}
		\begin{pmatrix}
			I\\
			0
		\end{pmatrix} &\prec 0.
	\end{align*}
\end{lemma}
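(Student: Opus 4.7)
The plan is to reduce the four matrix inequalities to a Schur complement computation together with the basic linear-algebra fact that $\tilde{R}\tilde{R}^\top \preceq I$ is equivalent to $\tilde{R}^\top \tilde{R} \preceq I$. The primal and dual conditions are related by the substitution $(M,W) \mapsto (-PM^{-1}P^\top, -W^\top)$, where $P$ swaps the two block rows; applying this substitution turns the dual conditions back into primal-type conditions for a transformed matrix, so by symmetry it suffices to prove the direction primal $\Rightarrow$ dual.

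I would partition $M = \begin{pmatrix} M_{11} & M_{12} \\ M_{21} & M_{22} \end{pmatrix}$ conformably with the primal subspaces. Using $M_{22} \succ 0$ (primal 2), completing the square in the first primal inequality yields
\[
L^\top M_{22} L + S \preceq 0, \qquad L := W + M_{22}^{-1} M_{21}, \quad S := M_{11} - M_{12} M_{22}^{-1} M_{21}.
\]
Hence $S \preceq -L^\top M_{22} L \preceq 0$, and non-singularity of $M$ (equivalently, of $S$, via the Schur determinant formula $\det M = \det M_{22} \cdot \det S$) strengthens this to $S \prec 0$. Since the top-left block of $M^{-1}$ equals $S^{-1}$, dual~2 follows immediately.

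For dual 1, I would solve the block system $MY = \begin{pmatrix} W^\top \\ -I \end{pmatrix}$ to obtain the closed-form expression
\[
\begin{pmatrix} W^\top \\ -I \end{pmatrix}^\top M^{-1} \begin{pmatrix} W^\top \\ -I \end{pmatrix} = M_{22}^{-1} + L S^{-1} L^\top.
\]
Introducing $\tilde{R} := M_{22}^{1/2} L (-S)^{-1/2}$, congruence by $M_{22}^{1/2}$ turns the claimed dual 1 inequality into $\tilde{R}\tilde{R}^\top \preceq I$, while the same substitution applied to the reformulated primal 1 bound $L^\top M_{22} L \preceq -S$ gives $\tilde{R}^\top \tilde{R} \preceq I$. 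These two bounds are equivalent (both are characterized by $\|\tilde{R}\|_{\mathrm{op}} \leq 1$), which closes the argument.

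The main obstacle is guaranteeing that $(-S)^{-1/2}$ is well defined, which requires the strict inequality $S \prec 0$. This is exactly where the non-singularity hypothesis on $M$ enters: without it, $S$ could be merely negative semi-definite, the square root $(-S)^{-1/2}$ would not exist, and one would have to fall back on a pseudo-inverse argument that may break the equivalence. Once $S \prec 0$ has been secured, the remainder of the proof is routine matrix algebra.
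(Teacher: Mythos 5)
Your proof is correct. Note first that the paper itself gives no proof of this lemma: it is stated as ``derived from Lemma 10.2'' of the cited reference and used as a black box, so there is no in-paper argument to compare against. Your argument is a valid, self-contained alternative to the one in that reference. All the key steps check out: completing the square turns the first primal inequality into $L^\top M_{22} L + S \preceq 0$ with $S$ the Schur complement of $M_{22}$; the determinant identity $\det M = \det M_{22}\cdot\det S$ correctly upgrades $S \preceq 0$ to $S \prec 0$, which is both the top-left block condition on $M^{-1}$ and the licence to form $(-S)^{-1/2}$; the block-inverse computation $\bigl(\begin{smallmatrix}W^\top\\-I\end{smallmatrix}\bigr)^\top M^{-1}\bigl(\begin{smallmatrix}W^\top\\-I\end{smallmatrix}\bigr) = M_{22}^{-1} + L S^{-1} L^\top$ is correct; and the reduction of both remaining inequalities to $\widetilde{R}\widetilde{R}^\top \preceq I$ versus $\widetilde{R}^\top\widetilde{R} \preceq I$ (equivalent via singular values, also for rectangular $\widetilde{R}$) closes the forward direction. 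The symmetry substitution $(M,W)\mapsto(-PM^{-1}P^\top,-W^\top)$ does map the dual conditions for $(M,W)$ onto primal-type conditions for the transformed pair and vice versa, and the transformed matrix inherits symmetry and non-singularity, so invoking the forward direction once more legitimately yields the converse. The main stylistic difference from the source you would otherwise cite is that Scherer's Lemma 10.2 is usually proved by an inertia argument---counting dimensions of maximal positive and negative subspaces of $M$ and $M^{-1}$ and exploiting that the column spaces of $(I;W)$ and $(W^\top;-I)$ are orthogonal complements---which generalizes more readily (e.g.\ to the case where the blocks are only definite on subspaces), whereas your Schur-complement route is more computational but entirely elementary and makes explicit where the non-singularity hypothesis is consumed.
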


The dual matrix inequality \eqref{eq:oldDual} can be convexified by the change of variables from $(P,K)$ to $(\widetilde{P},\widetilde{K})$, where $\widetilde{K}:=KP^{-1}$; we refer to \cite{boyd1994linear} for this convexification.

In $(\widetilde{P},\widetilde{K})$, the dual matrix inequality \eqref{eq:oldDual} reads
\begin{align}
    &\hspace{-2mm}
    \resizebox{\linewidth}{!}{$
	(\bullet)^\top
    \left(
	\begin{array}{ccc|cc}
		-\tilde{p}^{11} & -\tilde{p}^{12} & -(\tilde{k}^1)^\top\\
		-\tilde{p}^{21} & -\widetilde{P}^{22} & -(\widetilde{K}^2)^\top\\
		-\tilde{k}^1 & -\widetilde{K}^2 & -\widetilde{K}(\widetilde{P})^{-1}\widetilde{K}^\top\\
        \hline
		& & & \tilde{p}^{11} & \tilde{p}^{12}\\
		& & & \tilde{p}^{21} & \widetilde{P}^{22}
	\end{array}
    \right)
	\begin{pmatrix}
		1 & f^\top\\
		0 & A^\top\\
		0 & B^\top\\
        \hline
		1 & 0\\
		0 & I
	\end{pmatrix}$} \nonumber\\
    & \hspace{70mm}\succeq 0. \label{eq:reshapedDualMI}
\end{align}
Notice that the non-convexity in $\widetilde{K}(\widetilde{P})^{-1}\widetilde{K}^\top$ can easily be resolved making use of the Schur complement lemma. 

In addition, the representation \eqref{eq:reshapedDualMI} of the dual matrix inequality \eqref{eq:oldDual} reveals how established convexifying variable transforms relate to the one proposed in this article. Namely, the feasibility of \eqref{eq:reshapedDualMI} with $\widetilde{P} \succeq 0$ is equivalent to the feasibility of $\widetilde{F}(\Sigma,\Sigma,0) \succeq 0$ with $\Sigma \succeq 0$. One direction of this statement can be shown by choosing $\Sigma$ as
\begin{align}
    \begin{pmatrix}
		\sigma^{11} & \sigma^{12} & \sigma^{13}\\
		\sigma^{21} & \Sigma^{22} & \Sigma^{23}\\
		\sigma^{31} & \Sigma^{32} & \Sigma^{33}
	\end{pmatrix}
    &=
	\begin{pmatrix}
		-\tilde{p}^{11} & -\tilde{p}^{12} & -(\tilde{k}^1)^\top\\
		-\tilde{p}^{21} & -\widetilde{P}^{22} & -(\widetilde{K}^2)^\top\\
		-\tilde{k}^1 & -\widetilde{K}^2 & -\widetilde{K}(\widetilde{P})^{-1}\widetilde{K}^\top
	\end{pmatrix}. \label{eq:variableTransform}
\end{align}
For the reverse direction, we consider any $\Sigma \succeq 0$ with $\widetilde{F}(\Sigma,\Sigma,0) \succeq 0$. Next, perturb $\Sigma^{33}$ to project $\Sigma$ onto the image of the right hand side of \eqref{eq:variableTransform}. One can check that the latter does not lead to a violation of $\Sigma \succeq 0$ or $\widetilde{F}(\Sigma,\Sigma,0) \succeq 0$. This reverses the variable transform \eqref{eq:variableTransform}.

We emphasize that the variables $\widetilde{P}$ and $\widetilde{K}$ in which established variable transforms convexify the quadratic stabilization problem equal, by \eqref{eq:variableTransform}, blocks of the moment matrix $\Sigma$. This suggests that $\widetilde{P}$ and $\widetilde{K}$ should be interpreted as moment matrices. The fact that \eqref{eq:oldDual} is related to $\widetilde{F}(\Sigma,\Sigma,0) \succeq 0$ and not to $\widetilde{F}(\Sigma,\Sigma,0) = 0$ does not alter the fact that $\widetilde{P}$ and $\widetilde{K}$ can be interpreted as moment variables. Indeed, Theorem \ref{thm:controllerParametrization} can be proven with $\widetilde{F}(\Sigma,\Sigma,0) \succeq 0$ (or $\widetilde{F}(\Sigma,\Sigma,0) \preceq 0$) in \eqref{eq:cond} with the difference that, in this case, the sequence $\Sigma_\ti$ is an upper (lower) bound on the true moment matrices.

We mention that these relations to quadratic stabilization extend to state feedback $H_2$ synthesis. Indeed, it can be shown that solving \eqref{eq:timeInvProblem} for $R = \begin{pmatrix}
    0 & C & 0
\end{pmatrix}^\top \begin{pmatrix}
    0 & C & 0
\end{pmatrix}$ and $\Sigma^w = B^2 (B^2)^\top$ is equivalent to designing a state feedback controller minimizing the $H_2$-norm from an input with input matrix $B^2$ to an output with output matrix $C$; consider also \cite{kamgarpour2017infinite}.

\section{Example: Non-convex constraints}
\label{sec:6}

In this section we examine the possibility of considering non-convex constraints (avoiding non-convex regions in state space) with numerical examples. To this end, we study the stochastic optimal control problem
\begin{subequations}
    \label{eq:exampleProblem}
\begin{align}
    \minimize_{(\bbP_{u_\ti})} ~&~ \bbE \sum_{\ti =0}^{N-1} \left(\|x_\ti\|^2 + \|u_\ti\|^2\right) + 100\|x_N\|^2\\
    \mathrm{s.t.} ~&~ x_{\ti + 1} = x_\ti + u_\ti , \label{eq:exDynamics}\\
    ~&~ \bbE \|u_\ti\|^2 \leq 0.1, \label{eq:exInputConstraint}\\
    ~&~ \bbE \|x_\ti - \hat{x}_i\|^2 \geq (r_i + \varepsilon)^2, ~~~ i = 1,\ldots,s, \label{eq:exObstacles}\\
    ~&~ u_\ti \sim \bbP_{u_\ti}, \quad x_0 \sim \delta_{\bar{x}} .
\end{align}
\end{subequations}
We assume that the states $x_t$ and control inputs $u_t$ take values in $\bbR^2$. The system dynamics \eqref{eq:exDynamics} define a two-dimensional integrator. We might imagine that this integrator is a very simple model for a robot in a plane. In this case, the constraint \eqref{eq:exInputConstraint} restricts the speed of the robot. Moreover, the constraints \eqref{eq:exObstacles} model that the robot should maintain a distance of $r_i$ from the points $\hat{x}_i$ for $i = 1,\ldots,s$.
The initial distribution is the dirac distribution $\bbP_{x_0} = \delta_{\bar{x}}$ with $\bar{x} = \begin{pmatrix}
	10 & 0
\end{pmatrix}^\top$ and the moments
\begin{align*}
	\begin{pmatrix}
		\bar{\sigma}_0^{11} & \bar{\sigma}_0^{12}\\
		\bar{\sigma}_0^{21} & \ovl{\Sigma}_0^{22}
	\end{pmatrix}
	=
	\begin{pmatrix}
		1 & \bar{x}^\top\\
		\bar{x} & \bar{x}\bar{x}^\top
	\end{pmatrix}.
\end{align*}
The cost reflects that the robot should reach $\begin{pmatrix}
	0 & 0
\end{pmatrix}^\top$.
The problem \eqref{eq:exampleProblem} is an instance of \eqref{eq:timeVaryingProblem} for appropriate choices of $R_\ti$ and $H_{\ti i}$ for $\ti = 0,\ldots,N$ and $i = 1,\ldots,s$.

\subsection{Test 1}

For a first test, we consider two constraints \eqref{eq:exObstacles} with
\begin{align*}
	\begin{pmatrix}
		\hat{x}_{11}\\
		\hat{x}_{12}
	\end{pmatrix}
	&=
	\begin{pmatrix}
		-7.5\\
		0.5
	\end{pmatrix},
	&
	\begin{pmatrix}
		\hat{x}_{21}\\
		\hat{x}_{22}
	\end{pmatrix}
	&=
	\begin{pmatrix}
		-2.5\\
		-0.5
	\end{pmatrix}
\end{align*}
and $r_1 = r_2 = 1$. For these constraints, we map \eqref{eq:exampleProblem} to the semi-definite program \eqref{eq:timeVaryingSynthesis} and solve the latter to obtain a control policy $(\bbP_{u_\ti})$. We then simulate the closed loop of this policy with the dynamics \eqref{eq:exDynamics}. Ten trajectories of this closed loop are depicted in Figure \ref{fig:twoObstacles}. As we can see, the trajectories are almost not distinguishable. Indeed, the solution of this problem is an (almost) deterministic policy. The constraints are then also satisfied deterministically. Note that we have chosen $\varepsilon = 0.1$ in \eqref{eq:exObstacles} to create a visible margin between the areas to be avoided and the trajectory. The code for this example can be accessed via \url{https://github.com/SphinxDG/MomentRelaxationsControllerSynthesisAndNonConvexConstraints/tree/main}.

Assuming that we wish to satisfy the constraints \eqref{eq:exObstacles} deterministically, we can consider this first example as a case where the relaxation works well. Unfortunately, there are also cases, where the controller is not deterministic and constraints are not deterministically satisfied.

\begin{figure}
	\centering
	\input{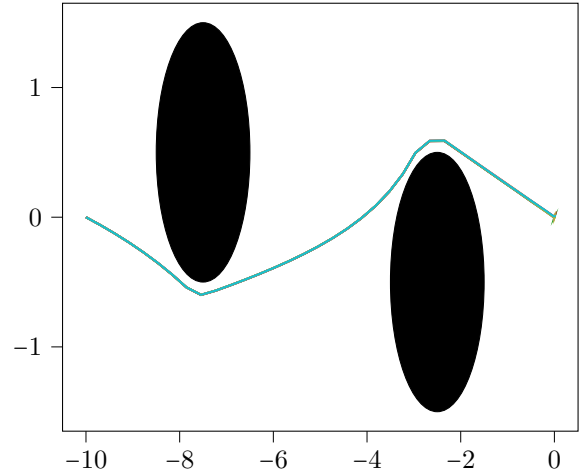}
	\caption{Vizualization of trajectories generated by the random policy \eqref{eq:controllerReconstruction} and two black areas to be avoided.}
	\label{fig:twoObstacles}
    \vspace{-3mm}
\end{figure}

\subsection{Test 2}

In a second test, we consider one constraint \eqref{eq:exObstacles} with $\hat{x}_1 = \begin{pmatrix}
	\hat{x}_{11} & \hat{x}_{12}
\end{pmatrix} = \begin{pmatrix}
-5 & 0
\end{pmatrix}$ and $r_1 = 1$, i.e., the area to be avoided is exactly in between the starting position of our system and the desired target.

If we solve the control problem \eqref{eq:exampleProblem} for this constraint, we obtain closed-loop trajectories resembling the ten trajectories depicted in Figure \ref{fig:oneObstacle}. This scenario presents a significantly different case compared to the previous example. The variance in the trajectories highlights that the policy is stochastic, and seven out of ten trajectories pass through the area to be avoided. If \eqref{eq:exampleProblem} results from the relaxation of a problem with hard constraints, then this example yields an inexact relaxation. This behavior is feasible for the control problem \eqref{eq:exampleProblem}, since we are optimizing over stochastic policies and impose only expectation constraints. Hence, we see that the solution of \eqref{eq:timeVaryingProblem} can yield a stochastic policy if \eqref{eq:timeVaryingProblem} includes non-convex costs or constraints. Moreover, in the stochastic case, some or even all of the trajectories sampled from the closed loop can violate the constraints. We mention that the situation of an \emph{obstacle} exactly in between a robot and a target is sometimes considered to be especially challenging since avoiding the obstacle by going to the left or going to the right seems equally preferable; consider, e.g., the elaboration on indecision in \cite{cathcart2023proactive}, where avoidance strategies are examined just for this scenario. Incidentally, the solution to the current example becomes deterministic if perturbing $\hat{x}_1$ by a tiny amount in the $x_2$ direction.

\begin{figure}
	\centering
	\input{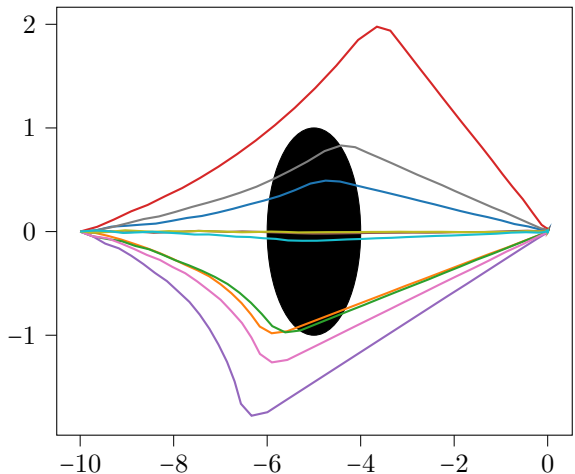}
	\caption{Vizualization of trajectories generated by the random policy \eqref{eq:controllerReconstruction} for an obstacle avoidance problem with one obstacle.}
	\label{fig:oneObstacle}
    \vspace{-3mm}
\end{figure}

Of course, this example shows that caution is required when using such a relaxation in practice. For non-convex problems, we cannot rely on obtaining a deterministic solution. For problems with soft constraints, the risk of a stochastic solution may be acceptable if the latter occurs rarely. For problems with hard constraints, as in obstacle avoidance, the risk may not be acceptable. Nevertheless, we believe that the methodology presented here may be useful for the iterative solution of such problems, e.g., for the initialization of solvers.

\section{Example: Fixed-point escape}
\label{sec:7}

A first example of the flexibility of the moment matrix approach to \eqref{eq:timeInvProblem} that resembles obstacle avoidance is given in Section \ref{sec:6}. As a second example, we now consider the swing up of an inverted pendulum. The dynamics of an inverted pendulum can be described by the system of nonlinear differential equations \cite{khalil2002control}
\begin{align}
    \begin{pmatrix}
        \dot{x}_1\\
        \dot{x}_2\\
        \dot{x}_3\\
        \dot{x}_4
    \end{pmatrix}
    =
    \begin{pmatrix}
        x_3\\
        x_4\\
        \frac{m_2lx_4^2\sin x_2 -m_2 g\sin x_2 \cos x_2 + u}{m_1 + (\sin x_2)^2 m}\\
        \frac{(m_2lx_4^2 \cos x_2 - (m_1+m_2)g)\sin x_2 - (l\cos x_2)u}{l(m_1 + (\sin x_2)^2 m_2)}
    \end{pmatrix}. \label{eq:invertedPendulum}
\end{align}
In this system of equations, $g = 9.81$ is the gravitational acceleration and $m_1 = 1$ is the mass of a cart on which a pendulum with mass $m_2 = 10^{-3}$ rests. The input $u$ is a force acting on the cart. The state $x_1$ is the position of the cart, $x_2$ is the angle of the pendulum, and $x_3$ and $x_4$ are the respective velocity and angular velocity.

To solve the swing up task, we design a controller that swings the pendulum high enough to reach the region of attraction of a second linear controller that stabilizes the upper pendulum position. The controller that makes the pendulum \emph{escape} from the lower equilibrium is obtained as the solution to the control problem
\begin{subequations}
\begin{align}
    \minimize_{(\bbP_{u})} ~&~ \lim_{N\to\infty} \frac{1}{N} \bbE \sum_{\ti = 0}^{N-1} (x_{1\ti}^2 + x_{3\ti}^2 - 10^4 e(x_{2\ti},x_{4\ti}) + u_{\ti}^2) \nonumber\\
    \mathrm{s.t.}~&~ x_{1\ti+1} = x_{1\ti} + h x_{3\ti}, \nonumber\\
    ~&~ x_{2\ti+1} = x_{2\ti} + h x_{4\ti}, \nonumber\\
    ~&~ x_{3\ti+1} = x_{3\ti} - h \frac{m_2g}{m_1}x_{2\ti} + h \frac{1}{m_2}u, \nonumber\\
    ~&~ x_{4\ti+1} = x_{4\ti} - h \frac{(m_1+m_2)g}{lm_2}x_{2\ti} + h \frac{l}{m_1}u, \nonumber\\
    ~&~ \bbE e(x_{2\ti},x_{4\ti}) \leq e(2,0), \label{eq:energyConstraint}\\
    ~&~ \bbE u_{\ti}^2 - \bbE u_{\ti}x_{\ti}^\top (\bbE x_\ti x_\ti^\top)^{-1} \bbE x_{\ti}u_{\ti} \geq 10^4 h, \label{eq:excitationConstraint}
\end{align}
\end{subequations}
which is (almost) an instance of the infinite-horizon design problem \eqref{eq:timeInvProblem}. The dynamic constraint of this control problem is obtained from a linearization of \eqref{eq:invertedPendulum} in the lower equilibrium and a subsequent Euler discretization. Unlike \eqref{eq:timeInvProblem}, this problem incorporates the additional constraint \eqref{eq:excitationConstraint}, which ensures that the designed control policy excites the pendulum sufficiently. Without this constraint, the optimal solution of the above problem would yield the zero policy. Fortunately, this constraint can be convexified in $\Sigma$ using the Schur complement. We also highlight the term $e(x_{2\ti},x_{4\ti}) = \frac{1}{2} mgl x_{2\ti}^2 + \frac{1}{2} gl^2 x_{4\ti}^2$ which enters negatively into the cost. This term is a quadratic approximation of the pendulum energy $mgl (1-\cos(x_{2\ti})) + \frac{1}{2} gl^2 x_{4\ti}^2$ making sure that the control policy pursues the goal of increasing the energy of the pendulum. Note that once the pendulum is supplied with enough energy, it will eventually reach the upper position. The energy constraint \eqref{eq:energyConstraint} caps the average pendulum energy to the value of the potential energy of a pendulum at an angle of $2 \mathrm{rad} \approx 115^\circ$ preventing the pendulum energy from going to infinity. 

Two exemplary closed loop trajectories of the proposed control policy with the nonlinear pendulum model \eqref{eq:invertedPendulum} are depicted in Figure \ref{fig:angle} and Figure \ref{fig:sledPos}. First, the policy for the escape from the lower equilibrium is active and supplies the pendulum with energy. Then, the pendulum reaches the region of attraction of a second controller and is stabilized in the upward position. The moment for the switching of control policies is determined based on a Laypunov function for the second controller.

The presented control strategy for the swing up of an inverted pendulum stands out by being composed of two linear controllers. This means that we can take advantage of the extensive possibilities for tuning linear controllers. For example, we can add additional constraints to the optimal control problem, such as restrictions on the position and speed of the cart. We also have the possibility of including a frequency-dependent cost in the optimization problem \cite{stein1987lqg}. This allows us to put a penalty on high frequency control inputs. Finally, we mention that the controller in the form of two linear controllers requires only minimal online computation.

\begin{figure}
	\centering
	\input{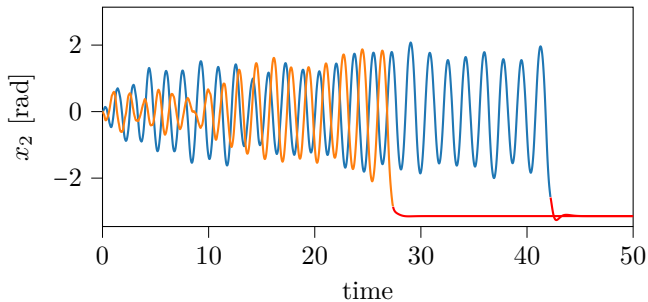}
	\vspace{-4mm}
	\caption{Two closed loop trajectories of the angle of the nonlinear inverted pendulum model \eqref{eq:invertedPendulum} with a swing up controller. Where the color of the trajectories is orange or blue, an escape controller for the lower equilibrium is used. Where the color of the trajectories is red, a stabilizing controller for the upper equilibrium is used.}
	\label{fig:angle}
    %\vspace{-3mm}
\end{figure}

\begin{figure}
	\centering
	\input{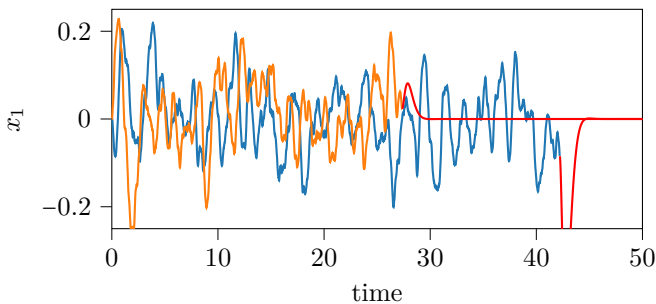}
	\vspace{-3mm}
	\caption{Two closed loop trajectories of the cart position of the nonlinear inverted pendulum model \eqref{eq:invertedPendulum} with a swing up controller. Where the color of the trajectories is orange or blue, an escape controller for the lower equilibrium is used. Where the color of the trajectories is red, a stabilizing controller for the upper equilibrium is used.}
	\label{fig:sledPos}
    \vspace{-4mm}
\end{figure}

\section{Conclusion}

We present a convexification strategy for linear state feedback synthesis problems with affine, time-varying system dynamics, random initial state, and additive stochastic noise. This convexification is based on moment matrices and permits for non-convex quadratic costs and constraints. Like second-order moment relaxations in non-convex quadratic programming, the case we study always permits for the extraction of a \emph{solution}, which is actually optimal if we relax hard constraints to expectation constraints. In addition, we identify the parameters of known convexification strategies based on the dualization lemma with blocks of our moment matrices. Hence, we interpret the new decision variables after applying the dualization lemma as moment variables.

\bibliographystyle{abbrv}
\bibliography{sources.bib}
\end{document}